\newtheorem{theorem}{Theorem}
\newtheorem{corollary}[theorem]{Corollary}
\newtheorem{conjecture}[theorem]{Conjecture}
\newtheorem{problem}[theorem]{Problem}
\newcommand{\IR}{\sigma_t^{f(n)}}
\newcommand{\irr}{{\rm irr}}
\newcommand{\imb}{{\rm imb}}
\newcommand{\beq}{\begin{eqnarray}}
\newcommand{\eeq}{\end{eqnarray}}
\newcommand{\beqs}{\begin{eqnarray*}}
\newcommand{\eeqs}{\end{eqnarray*}}
\begin{document}

\title{Extremizing antiregular graphs by  modifying total $\sigma$-irregularity}
\author{ Martin Knor$^{a}$,
Riste \v{S}krekovski$^{b,c,d,e}$, Slobodan Filipovski$^{c}$, Darko Dimitrov$^{b}$ \\[0.3cm] 
{\small $^{a}$ \textit{Slovak University of Technology in Bratislava, Slovakia}} \\ [0.1cm] 
{\small $^{b}$ \textit{Faculty of Information Studies in Novo Mesto, Slovenia }}\\ [0.1cm] 
{\small $^{c}$ \textit{FAMNIT, University of Primorska, Koper, Slovenia }}\\[0.1cm]  
{\small $^{d}$ \textit{Faculty of Mathematics and Physics, University of Ljubljana, Slovenia }}\\[0.1cm] 
{\small $^{e}$ \textit{Rudolfovo -- Science and Technology Centre Novo Mesto, Slovenia }}\\[0.1cm] 
}

\maketitle

\begin{abstract} 
The total $\sigma$-irregularity is given by
$
\sigma_t(G) = \sum_{\{u,v\} \subseteq V(G)} \left(d_G(u) - d_G(v)\right)^2,
$
where $d_G(z)$ indicates the degree of a vertex $z$ within the graph $G$.
It is known that the graphs maximizing $\sigma_{t}$-irregularity are split graphs with only a few distinct degrees.
Since one might typically expect that graphs with as many distinct 
degrees as possible achieve maximum irregularity measures, we modify this invariant to
$
\IR(G)= \sum_{\{u,v\} \subseteq V(G)} |d_G(u)-d_G(v)|^{f(n)},
$
where $n=|V(G)|$ and $f(n)>0$. We study under what conditions the above modification obtains its maximum 
for antiregular graphs. We consider general graphs, trees, and chemical graphs, and accompany our results 
with a few problems and conjectures.

\end{abstract}

\section{Introduction and preliminaries}

We restrict our study to undirected graphs with a finite number of vertices, excluding any graphs that have loops or parallel edges.
For any terminology or notation not explicitly defined herein, we direct the reader 
to the comprehensive textbook by Bondy and Murty \cite{bm-gt-2008}.

The degree of a vertex $v$ in a graph $G$,   $d_G(v)$, is defined as the number of edges incident to that vertex. 
A graph \( G \) is labeled \emph{regular} when every vertex has the identical degree; conversely, 
it is labeled \emph{irregular}. An invariant of the graph \( G \), symbolized by \( I(G) \), is referred 
to as an irregularity metric or index of irregularity if it meets the conditions 
\( I(G) \geq 0 \) and \( I(G) = 0 \) exclusively when the graph \( G \) is regular.
In this study, we delve into irregularity measures of graphs, focusing on differences 
between pairs of vertices within a graph.

Let $G$ have $n$ vertices. For any vertex $v \in V(G)$, its degree $d_G(v)$ satisfies
$0 \leq d_G(v) \leq n-1$. If $d_G(v) = n-1$, it implies that all other vertices have positive degrees,
ensuring that no vertex is isolated. Consequently, there cannot exist a graph where all vertices have distinct degrees.
The most favorable scenario in this context is to have $n-1$ distinct degrees, with one degree repeated.
Graphs with such configurations are referred to by various names, with the most widely recognized term being {\it antiregular.}
Notably, there exist precisely two antiregular graphs on $n$ vertices, with one being the complement of the other.
As one of these graphs exhibits vertices with degrees ranging from $0$ to $n-2$, it naturally consists of disconnected components.
Consequently, there exists only one connected antiregular graph on $n$ vertices.
This specific graph features vertices with degrees ranging from $1$ to $n-1$, with the degree $\lfloor\frac{n}{2}\rfloor$
occurring twice \cite{bc-ngp-1967}. For further exploration of properties and findings related to
antiregular graphs, interested readers are directed to the survey by Ali \cite{a-sag-2020}.

The \emph{imbalance} of an edge $e=uv \in E$ is defined as ${\imb}(e)=\left|d_G(u)-d_G(v)\right|$.
In \cite{AlbertsonIrr}, Albertson defined the \emph{irregularity} of $G$ as
sum  of imbalances of all edges of a graph, i.e.,
\beq \label{eqn:003}
{\irr}(G) = \sum_{e \in E(G)}  {\imb}(e) = \sum_{uv \in E(G)} | d_G(u) - d_G(v) |. 
\eeq
Another irregularity measure introduced in \cite{Dimit-Abdo} is closely connected to (\ref{eqn:003}). 
Similar to (\ref{eqn:003}), this measure also quantifies the irregularity of a graph solely based 
on the  differences in the degrees of its vertices. For a given graph \( G \), it is expressed as:
\beq\label{eqn:003-t}
\irr_t(G) =  \sum_{\{u,v\} \subseteq V(G)} \left|d_G(u)-d_G(v)\right|. \label{eqn:003-t}
\eeq

Due to its evident relationship with the irregularity measure $\irr(G)$, the invariant ${\rm irr}_t(G)$ 
is commonly referred to as the total irregularity of a graph. 
The total irregularity of a graph is solely determined by its degree sequence, making it an effective measure even when vertex adjacency details are unavailable. Notably, there are graphs exhibiting high $\text{irr}$ despite having very limited degree diversity, a characteristic unexpected in highly irregular graphs.
Conversely, as demonstrated in \cite{Dimit-Abdo}, graphs with maximal ${\rm irr_t}$ exhibit large degree sets, 
with some even possessing the largest possible ones. A comparison between irregularity 
and total irregularity was presented in \cite{Dar-Riste}.

 An alternative to the Albertson irregularity index, aiming to avoid the absolute value calculation, 
 led to the introduction of the irregularity index ${\sigma}(G)$ in \cite{gtycc-ipsi}. It is defined as follows:
\beq
{\sigma}(G) &=& \sum_{uv\in E(G)}(d_G(u) - d_G(v))^{2}.   \nonumber 
\eeq
Graphs with maximal $\sigma$-irregularity have been characterized in \cite{adg-gmsi-2018}, 
where lower bounds on $\sigma$-irregularity were also established. The inverse problem, 
which involves determining the existence of a graph with $\sigma$-irregularity equal to a 
given non-negative integer, was addressed in \cite{gtycc-ipsi, adg-gmsi-2018}. 
R{\' eti} \cite{r-spgiiprsi-2019} further explored $\sigma$-irregularity in comparison 
with various well-known irregularity measures across certain classes of graphs.

A connected graph is termed {\it \( k \)-cyclic} if it consists of \( n \) vertices and \( n + k - 1 \) edges.
 In \cite{aaabh-msikcgi-2023}, the study determined connected $k$-cyclic graphs exhibiting maximal $\sigma$-irregularity.

If a sequence $\mathcal{D} = (d_1, d_2, \ldots, d_n)$ corresponds to the degrees of vertices in some graph, it is called {\it graphical}. Such a sequence, when arranged in non-increasing order with $d_1 \geq d_2 \geq \cdots \geq d_n$, is referred to as a {\it degree sequence}.
The characterization of extremal graphs, concerning $\sigma$-irregularity, with a given degree sequence was recently undertaken in \cite{dglc-etfdssi-2023}.
 
To establish that a given sequence of non-negative integers represents the degree sequence of some graph, one can utilize the following characterization by Erd\H{o}s and Gallai \cite{eg-ggdv-60}.

\begin{theorem}\label{te-erdos-gallai-modified}
For any integer $n \geq 1$ and non-increasing sequence $d_1 \geq d_2 \geq \dots \geq d_n$, there exists a graph with $n$ vertices having the respective degrees $d_1, d_2, \dots, d_n$ if and only if two conditions are satisfied:
\begin{enumerate}
\item $\sum_{i=1}^n d_i$ is even;
\item For all $1 \leq k \leq n-1$, the inequality $d_1 + d_2 +\dots+ d_k \leq k(k-1)+\sum\limits_{i=k+1}^{n} \min(k,d_i)$ holds.
\end{enumerate}
\end{theorem}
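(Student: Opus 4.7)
The plan is to split the biconditional into the easy necessity direction and the substantially harder sufficiency direction, handling necessity by a direct double-counting argument and sufficiency by induction.

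For necessity, assume a graph $G$ on vertices $v_1, \dots, v_n$ realizes the sequence, with $d_G(v_i) = d_i$. Condition (1) is just the handshake lemma, since $\sum_{i=1}^n d_i = 2|E(G)|$. For condition (2), fix $k$ and set $A = \{v_1, \dots, v_k\}$. Splitting edges by how many endpoints lie in $A$, I write
\[
\sum_{i=1}^k d_i \;=\; 2\,\bigl|E(G[A])\bigr| \;+\; \bigl|E(A,\,V(G)\setminus A)\bigr|.
\]
The first term is at most $2\binom{k}{2} = k(k-1)$. The second term equals $\sum_{i=k+1}^n |N_G(v_i)\cap A|$, and for each $i>k$ the summand is bounded both by $|A| = k$ and by $d_G(v_i) = d_i$, hence by $\min(k, d_i)$. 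Adding the two bounds gives exactly the inequality in (2).

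For sufficiency I would induct on $n$ (or, equivalently, on $\sum d_i$). The base case is trivial, and for the inductive step the plan is to carry out a Havel--Hakimi-style reduction: form a new sequence by deleting $d_1$, subtracting $1$ from each of the next $d_1$ largest entries $d_2, \dots, d_{d_1+1}$, and reordering non-increasingly to obtain a sequence on $n-1$ terms. If this reduced sequence is graphical by the induction hypothesis, then a realizing graph together with a fresh vertex joined to the $d_1$ vertices whose degrees were decremented gives a realization of the original sequence, and the parity condition is automatically preserved by the reduction.

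The hard part is verifying that the reduced sequence still satisfies the Erdős--Gallai inequality (2) for every $k' \in \{1, \dots, n-2\}$. One must control three regimes separately: small $k'$, where the decrement of $d_1$ dominates; intermediate $k'$ straddling the index $d_1+1$, where reordering after subtraction can shuffle entries across the cutoff; and large $k'$, where the bound $\min(k',d_i)$ on the tail changes qualitatively. In each regime the argument is an inequality comparison between the truncated sums before and after reduction, using the original hypothesis (2) applied at carefully chosen indices (typically $k'$, $k'+1$, and sometimes $d_1$). Once this case analysis is pushed through, the induction closes and Theorem~\ref{te-erdos-gallai-modified} follows.
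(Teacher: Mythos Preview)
The paper does not prove Theorem~\ref{te-erdos-gallai-modified} at all: it is quoted as the classical Erd\H{o}s--Gallai characterization with a citation to \cite{eg-ggdv-60}, and is then invoked as a black box in the proof of Theorem~\ref{thm:chem_max}. There is therefore no ``paper's own proof'' to compare against.

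As for your proposal itself: the necessity direction is complete and correct. The sufficiency direction, however, is only a plan. You describe the Havel--Hakimi reduction and correctly identify that the crux is showing the reduced sequence again satisfies the Erd\H{o}s--Gallai inequalities, but you stop at ``once this case analysis is pushed through'' without actually carrying out any of the three regimes. That case analysis is the entire content of the sufficiency proof, and it is genuinely delicate (the reordering after decrementing can move entries across the cutoff in ways that require care). So what you have written is an outline, not a proof; to be a proof you would need to execute the inequality comparisons you allude to, or replace the sketch with one of the known short arguments (e.g., Choudum's induction on $\sum d_i$, or the Fulkerson--Hoffman--McAndrew network-flow proof).
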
%

A connected graph with maximum degree at most $4$ is commonly referred to as a {\it chemical graph}, while a non-cyclic chemical graph is denoted as a {\it chemical tree}. In \cite{adg-gmsi-2018}, it was established that among the chemical trees explored, the path graph exhibits the smallest $\sigma$-irregularity. Additionally, \cite{kpvsd-sict-2024} provided a characterization of chemical trees with maximal $\sigma$-irregularity.

The graphs with the same degree sequence do not necessarily have the same $\sigma$-irregularity 
(see Figure~\ref{fig-different} from \cite{ds-siiip-2023} for an example).
\begin{figure}[htb]
\begin{center}
\vspace{-0.1cm}
\includegraphics{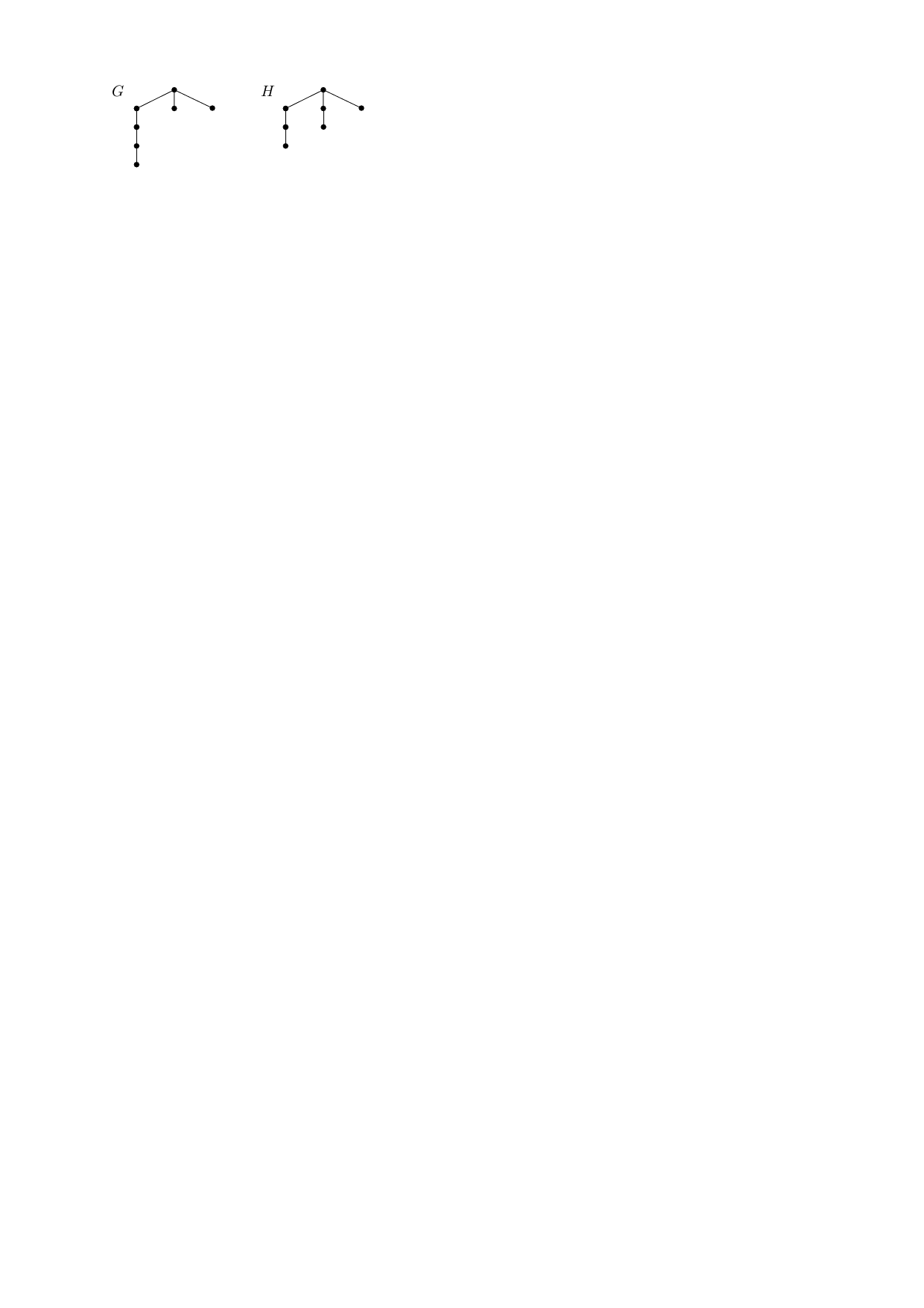}   
\caption{Illustration of two distinct graphs, $G_1$ and $G_2$, which share the degree sequence $1, 1, 1, 2, 2, 2, 3$. 
Despite their differing irregularity values ($\irr(G_1)=10$ and $\irr(G_2)=8$), they exhibit identical total irregularity 
($\irr_t(G_1)=\irr_t(G_2)=22$).}

\label{fig-different}
\vspace{-0.1cm}
\end{center}
\end{figure}
To avoid this, a variant of ${\sigma}$-irregularity, which is invariant with respect to a given degree sequence, 
was introduced in \cite{ds-siiip-2023}. It is called the {\it total ${\sigma}$-irregularity} and is defined as
\beq \label{def:sigma_t}
\sigma_t(G) =  \sum_{\{u,v\} \subseteq V(G)} \left(d_G(u)-d_G(v)\right)^2. \nonumber
\eeq

The publication \cite{ds-siiip-2023} provided initial insights into $\sigma_{t}$ by relating it to the first Zagreb index, defined as
$
M_1(G) = \sum_{v \in V} d(v)^2.
$
It established the formula $\sigma_{t}(G) = n M_{1}(G) - 4m^{2}$ for simple connected graphs, offering a quantitative understanding of $\sigma_{t}$ within this context.
Furthermore,  \cite{ds-siiip-2023} offered insights into the behavior of $\sigma_{t}$ in tree structures, 
revealing that the star tree possesses the maximum $\sigma_{t}$, while the path graph has the minimum $\sigma_{t}$.

In a recent study \cite{fdks-srosti-2024}, it was shown that $\sigma_{t}$ is equal to the degree variance of the graph. 
Leveraging this finding, the investigation proceeded to characterize irregular graphs and irregular bipartite graphs 
possessing maximal $\sigma_{t}$-irregularity. 
Additionally, the same study \cite{fdks-srosti-2024} provides various upper and lower bounds for the $\sigma_{t}$-irregularity index. 
Through the application of Fiedler's characterization of the largest and second smallest Laplacian eigenvalues of the graph, 
the authors in \cite{fdks-srosti-2024} established new relationships between $\sigma_{t}$ and $\sigma$.
These results deepen our understanding of $\sigma_{t}$ behavior also across different graph structures.

A {\em clique} within a graph $G$ refers to a subgraph in which every pair of vertices is connected by an edge. 
The {\em union} of two graphs $G_1$ and $G_2$, denoted as $G = G_1 \cup G_2$, is constructed by combining 
the disjoint vertex sets $V_1$ and $V_2$ and their corresponding edge sets $E_1$ and 
$E_2$ into $V = V_1 \cup V_2$ and $E = E_1 \cup E_2$. The {\em join} operation on graphs $G_1$ 
and $G_2$, denoted $G = G_1 + G_2$, involves taking the union $G = G_1 \cup G_2$ and 
adding edges between each vertex in $V_1$ and every vertex in $V_2$.
If a graph can be partitioned into a clique and an independent set, it is known as a {\em split graph}.
In the context of $\sigma_{t}$-irregularity, as well as in the case of the irregularity ${\rm irr}$, bidegreed graphs, specifically split graphs, are known to maximize these indices.
Furthermore, some graphs exhibit high values of ${\rm irr}$ and $\sigma_{t}$-irregularity while having minimal degree sets, a trait not typically associated with highly irregular graphs.
These findings are somewhat surprising, as one would typically expect that maximum irregularity measures are achieved by graphs with as many distinct degrees as possible.

Our aim is to generalize the $\sigma_t$ irregularity index so that its minimum is still attained by regular graphs, 
but its maximum will be attained by the antiregular graph. Specifically, we define the index $\IR(G)$ as follows:
$$
\IR(G)= \sum_{\{u,v\} \subseteq V(G)}|d_G(u)-d_G(v)|^{f(n)},
$$
where $n=|V(G)|$ and $f(n)$ is a function defined for $n\ge 4$. We note that the cases when $n\le 3$ are trivial.
The function $f(n)$ can be arbitrary, even constant, but it is expected that larger differences in degrees yield a greater contribution. Therefore, we require $f(n)>0$ to ensure meaningful results.
The value of $\IR(G)$ is $0$ if and only if $G$ is regular; otherwise, $\IR(G)>0$. However, our goal is to ensure that $\IR(G)$ is maximized when $G$ is antiregular. This poses a challenge when $f(n)$ is constant because for large $n$, one significant difference in degrees may outweigh numerous smaller differences (such as $1$, for example).
To address this issue, we assume that $\lim_{n\to\infty}f(n)=0$. While this requirement is not necessarily sufficient, it serves as a starting point for our exploration.

\section{Antiregular extension of $\sigma_t$-irregularity }

\subsection{General graphs}

\begin{theorem}
\label{thm:gr_bin}
Let $0 < f(n)\le\log_{n-2}\big(\frac{n^2-n-2}{n^2-n-4}\big)$ and let $n\ge 4$.
Then $\IR(G)$ achieves its maximal value if and only if $G$ is antiregular.
\end{theorem}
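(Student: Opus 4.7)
The plan is to compare $\IR(G)$ against $\IR(A_n)$ for any non-antiregular graph $G$ on $n$ vertices using two elementary bounds---one upper, one lower---that together cut off precisely at the threshold in the hypothesis.

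The first ingredient is the observation that in any simple graph on $n$ vertices a vertex of degree $0$ and a vertex of degree $n-1$ cannot coexist, so $|d_G(u)-d_G(v)|\leq n-2$ for every pair $u,v\in V(G)$. Let $z(G)$ count the pairs $\{u,v\}\subseteq V(G)$ with $d_G(u)=d_G(v)$. Since $n$ vertices can take at most $n-1$ distinct degree values, $z(G)\geq 1$, with equality precisely when $G$ is antiregular (the classical characterization of Behzad and Chartrand recalled in the introduction). Hence whenever $G$ is not antiregular, $z(G)\geq 2$.

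Given this, I would derive the chain
$$
\IR(G) \;\leq\; \left(\binom{n}{2}-z(G)\right)(n-2)^{f(n)} \;\leq\; \left(\binom{n}{2}-2\right)(n-2)^{f(n)},
$$
since each of the $\binom{n}{2}-z(G)$ pairs with unequal degrees contributes at most $(n-2)^{f(n)}$. On the other side, $A_n$ has exactly $\binom{n}{2}-1$ pairs with unequal degrees, each contributing at least $1^{f(n)}=1$, and for $n\geq 4$ at least one such pair has difference at least $2$, giving the strict bound $\IR(A_n)>\binom{n}{2}-1$. The hypothesis $f(n)\leq \log_{n-2}\!\left(\frac{n^2-n-2}{n^2-n-4}\right)$ is exactly equivalent to $(n-2)^{f(n)}\leq \frac{n^2-n-2}{n^2-n-4}$, which rearranges to $\left(\binom{n}{2}-2\right)(n-2)^{f(n)}\leq \binom{n}{2}-1$. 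Chaining the inequalities yields $\IR(G)\leq \binom{n}{2}-1<\IR(A_n)$ for every non-antiregular $G$, proving both directions simultaneously.

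The only obstacle I anticipate is formally justifying that $z(G)=1$ forces $G$ to be antiregular, but this is folklore (and essentially the characterization recalled in the introduction), so I would cite it rather than redo the argument. Note that the threshold in the hypothesis has been reverse-engineered so that the middle inequality becomes an equality at the boundary; the strict separation between $\IR(G)$ and $\IR(A_n)$ then comes entirely from the strict bound $\IR(A_n)>\binom{n}{2}-1$, which holds as soon as $f(n)>0$ and $n\geq 4$.
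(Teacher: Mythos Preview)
Your proposal is correct and follows essentially the same approach as the paper: bound $\IR$ of a non-antiregular graph above by $\bigl(\binom{n}{2}-2\bigr)(n-2)^{f(n)}$, bound $\IR$ of the antiregular graph strictly below by $\binom{n}{2}-1$, and use the hypothesis on $f(n)$ to bridge the two. Your write-up is slightly more explicit in justifying why the maximum degree difference is $n-2$ and why $z(G)=1$ characterizes antiregularity, but the argument is the same.
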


\begin{proof}
Consider $G$ as an antiregular graph, where vertices $u, v \in V(G)$ satisfy $d_G(u) \ne d_G(v)$.
Since $f(n)>0$, we have $|d_G(u)-d_G(v)|^{f(n)}\ge 1$. Taking into account that 
certain differences in degrees exceed 1 (recall that $n\ge 4$), we can deduce that
\begin{align}
\IR(G) > \tbinom n2-1=\frac{n^2-n-2}2.  \nonumber
\end{align}

On the other hand, the difference between the degrees of any two vertices is at most $n-2$. 
If $H$ is not antiregular, then it holds that:
\begin{align}
\IR(H)\le\big(\tbinom n2-2\big)(n-2)^{f(n)}=\frac{n^2-n-4}2(n-2)^{f(n)}. \nonumber
\end{align}

Starting with $f(n) \leq \log_{n-2}\left(\frac{n^2-n-2}{n^2-n-4}\right)$, we can derive the inequality
\[
\frac{n^2-n-2}{2} \geq \frac{n^2-n-4}{2} (n-2)^{f(n)},
\]
which, together with the inequalities for $\IR(G)$ and $\IR(H)$ above, leads to $\IR(G) > \IR(H)$.
\end{proof}

If $S$ is a sequence of $n$ integers, denote
$$
\IR(S)=\sum_{a,b\in S}|a-b|^{f(n)}.
$$
In the proof of Theorem~{\ref{thm:gr_bin}} we did not use the fact that the sequence of degrees is obtained from a real graph.
So we proved the following.

\begin{corollary}
\label{cor:se:bin}
Let $f(n)=\log_{n-2}\big(\binom n2-1\big)/\big(\binom n2-2\big)$, $n\ge 4$, and let $S=\{a_i\}_{i=1}^n$ be a sequence of integers such that $1\le a_i\le n-1$ for every $i$, $1\le i\le n$.
Then $\IR(S)$ is maximal if and only if $\{a_1,a_2,\dots,a_n\}=\{1,2,\dots,n-1\}$.
\end{corollary}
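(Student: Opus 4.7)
The plan is to transcribe the argument of Theorem~\ref{thm:gr_bin} almost verbatim, replacing ``antiregular graph'' by ``sequence whose value set equals $\{1,2,\dots,n-1\}$''. As the authors already remark, the two estimates used in the theorem depended only on two combinatorial facts about a multiset of $n$ integers drawn from $\{1,\dots,n-1\}$: the number of unordered index pairs whose entries coincide, and the maximum possible absolute difference $n-2$. Neither fact uses graphicality, so the argument ports across without any new ingredient.

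First I would establish the lower bound. If the value set of $S$ equals $\{1,2,\dots,n-1\}$, then by pigeonhole exactly one value is repeated, so among the $\binom{n}{2}$ unordered index pairs precisely one satisfies $|a_i-a_j|=0$ and the remaining $\binom{n}{2}-1$ satisfy $|a_i-a_j|\ge 1$. Since $n\ge 4$, the set contains for instance $1$ and $3$, hence some pair even has $|a_i-a_j|\ge 2$; that single pair contributes $2^{f(n)}>1$ while all the others contribute $\ge 1$, so $\IR(S)>\binom{n}{2}-1=\tfrac{n^2-n-2}{2}$, exactly as in Theorem~\ref{thm:gr_bin}.

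Next I would derive the upper bound. If the value set of $S$ is a proper subset of $\{1,2,\dots,n-1\}$, then $S$ takes at most $n-2$ distinct values; placing $n$ entries into at most $n-2$ bins forces $\sum_j\binom{k_j}{2}\ge 2$, i.e.\ at least two index pairs have $a_i=a_j$ (either some value appears three times, contributing $\binom{3}{2}=3$ coincident pairs, or two distinct values each appear twice). Hence at most $\binom{n}{2}-2$ pairs contribute, each at most $(n-2)^{f(n)}$, giving $\IR(S)\le\bigl(\binom{n}{2}-2\bigr)(n-2)^{f(n)}=\tfrac{n^2-n-4}{2}(n-2)^{f(n)}$.

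Finally, the specified $f(n)=\log_{n-2}\bigl((\binom{n}{2}-1)/(\binom{n}{2}-2)\bigr)$ is precisely the value for which $\tfrac{n^2-n-4}{2}(n-2)^{f(n)}=\tfrac{n^2-n-2}{2}$, so the strict lower bound in the first case exceeds the upper bound in the second, which is exactly the claimed characterization. No genuine obstacle is expected: the only ingredient beyond literally copying the proof of Theorem~\ref{thm:gr_bin} is the short pigeonhole check that $\le n-2$ distinct values forces at least two coincident index pairs.
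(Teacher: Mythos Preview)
Your proposal is correct and follows exactly the paper's approach: the paper's entire justification for the corollary is the single sentence preceding it, namely that the proof of Theorem~\ref{thm:gr_bin} nowhere uses that the degree sequence comes from an actual graph, and you have simply spelled this observation out. The only addition is your explicit pigeonhole verification that at most $n-2$ distinct values forces at least two coincident index pairs, which the paper leaves implicit.
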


However, for general sequences we can prove a result stronger than Theorem~{\ref{thm:gr_bin}}.
Observe that if $n\ge 4$, then $\log_{n-2}\big((x+1)/x\big)$ is a decreasing function 
for $x\ge 1$, and therefore, $\log_{n-2}\frac{n-1}{n-2}>\log_{n-2}\big(\binom n2-1\big)/\big(\binom n2-2\big)$.

\begin{theorem}
\label{thm:se_lin}
Let $f(n)=\log_{n-2}\frac{n-1}{n-2}$, $n\ge 4$, and let $S=\{a_i\}_{i=1}^n$ be a sequence of integers such that $1\le a_i\le n-1$ for every $i$, $1\le i\le n$.
Then $\IR(S)$ is maximal if and only if $\{a_1,a_2,\dots,a_n\}=\{1,2,\dots,n-1\}$ or if $n=4$ and $S=(1,1,3,3)$.
\end{theorem}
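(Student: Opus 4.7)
The plan is to mirror the two-sided bound argument of Theorem~\ref{thm:gr_bin} and Corollary~\ref{cor:se:bin}, but with sharpened estimates to accommodate the larger exponent $f(n)=\log_{n-2}\frac{n-1}{n-2}$. The key identity is $(n-2)^{f(n)}=\frac{n-1}{n-2}$, so the maximum per-pair contribution is only marginally above $1$, and the naive upper bound used before no longer suffices. For any sequence $S$, let $r_v$ denote the multiplicity of $v\in\{1,\dots,n-1\}$ in $S$, and let $Z(S)=\sum_v\binom{r_v}{2}$ denote the number of zero-difference index pairs. Pigeonhole forces $Z(S)\ge 1$, with equality iff $S$ is set-antiregular; otherwise either some $r_v\ge 3$ or two distinct values each have $r_v\ge 2$, so $Z(S)\ge 2$.

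First I would write $\IR(S)$ explicitly for a set-antiregular $S$ whose repeated value is $k$:
\[
\IR(S)=\sum_{d=1}^{n-2}(n-1-d)\,d^{f(n)}+\sum_{i\ne k}|i-k|^{f(n)}.
\]
A direct analysis shows the second sum is maximized at $k\in\{1,n-1\}$, giving the set-antiregular value $M_{\mathrm{sa}}(n):=\sum_{d=1}^{n-2}(n-d)\,d^{f(n)}$.

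Second, for any non-set-antiregular $S$, combining $Z(S)\ge 2$ with the per-pair cap $(n-2)^{f(n)}=\frac{n-1}{n-2}$ yields
\[
\IR(S)\le\Bigl(\tbinom{n}{2}-2\Bigr)\cdot\frac{n-1}{n-2}.
\]
Equality forces $Z(S)=2$ together with every non-zero pair realizing difference exactly $n-2$. The latter pushes all entries of $S$ into $\{1,n-1\}$, and then $r_1+r_{n-1}=n$ combined with $\binom{r_1}{2}+\binom{r_{n-1}}{2}=2$ admits a solution only when $n=4$ with $r_1=r_3=2$; this identifies precisely the exceptional sequence $S=(1,1,3,3)$.

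Third, I would compare the two expressions. For $n=4$ both evaluate to $6$, producing the tie that underlies the exceptional case. For $n\ge 5$ the equality case of the non-set-antiregular bound is ruled out by the arithmetic in the previous step, so a refined estimate is required. The main technical obstacle is this final numerical comparison for $n\ge 5$: one must show that $M_{\mathrm{sa}}(n)$ strictly exceeds the sharpened non-set-antiregular bound. This rests on the precise calibration of $f(n)$: the small-$d$ contributions $d^{f(n)}$ in $M_{\mathrm{sa}}(n)$ are close to $1$ but are amplified by the large weights $n-d$, whereas a non-set-antiregular $S$ cannot simultaneously have many pairs of the extremal difference $n-2$. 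Once this inequality is verified, combining the three steps characterizes the maximizers exactly as stated.
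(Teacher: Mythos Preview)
Your approach diverges from the paper's, and the third step is a genuine gap rather than a deferred detail. The paper does not attempt any global two-sided comparison; it uses a local exchange. If a maximizer $S$ misses some value $b$, replace one copy of the most frequent value $c$ by $b$ to obtain $S'$. All pair contributions not involving the swapped entry cancel; the swapped entry loses at most $(n-2)^{f(n)}k=\frac{n-1}{n-2}\,k$ (with $k$ the number of entries of $S$ unequal to $c$) and gains at least $k+1$, and one checks $\frac{n-1}{n-2}\,k\le k+1$. Because the comparison is between $S$ and a \emph{specific} neighbouring sequence, no worst-case envelope over all non-set-antiregular sequences is ever needed.

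By contrast, your plan requires $M_{\mathrm{sa}}(n)>\bigl(\tbinom{n}{2}-2\bigr)\frac{n-1}{n-2}$ for $n\ge 5$, and this inequality is \emph{false}. At $n=5$ one has $3^{f}=\tfrac43$ and $2^{f}=(4/3)^{\log_3 2}\approx 1.199$, hence $M_{\mathrm{sa}}(5)=4+3\cdot 2^{f}+2\cdot\tfrac43\approx 10.26$, while your non-set-antiregular cap is $8\cdot\tfrac43\approx 10.67$; the discrepancy persists for larger $n$ (e.g.\ $n=10$ gives roughly $46.8$ versus $48.4$). So the ``refined estimate'' you postpone is the entire difficulty, and the crude $Z(S)\ge 2$ bound cannot close it. A smaller side issue: your Step~1 correctly shows that $\IR$ is \emph{not} constant across set-antiregular sequences---only the repeated value $k\in\{1,n-1\}$ attains $M_{\mathrm{sa}}(n)$---so even if Step~3 went through, you would not recover the ``if and only if'' exactly as stated; you should flag that rather than silently pass to $M_{\mathrm{sa}}(n)$.
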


\begin{proof}
By way of contradiction, suppose that $S$ achieves the maximum value for $\IR$, but some integer value from $[1,n-1]$ is missing in $S$, say $b$.
Let $c$ be a value which occurs most often in $S$.
Obviously, $c$ occurs in $S$ at least twice.
Now remove one occurrence of $c$ in $S$, replace it by $b$, and denote the resulting sequence by $S'$.
We show that $\IR(S')>\IR(S)$.

Suppose that $S$ contains exactly $k$ values distinct from $c$.
Obviously $k\ge 1$, since if $k=0$ then $\IR(S)=0$.
Also $k \leq n - 3$, since $c$ occurs at least twice.
Analogously to the proof of Theorem~{\ref{thm:gr_bin}}, the contribution of one occurrence $c$ 
to $\IR(S)$ is at most $(n-2)^{f(n)}\cdot k$, while the contribution of the element with value $b$ to $\IR(S')$ is at least $k+1$.
Therefore, 
$\IR(S) - (n-2)^{f(n)}\cdot k + k+1 \leq \IR(S')$.
Since $k \leq n - 3$, it follows that $-(n-2)^{f(n)} \cdot k + k + 1 > 0$ for $f(n) = \log_{n-2} \frac{n-1}{n-2}$. 
Consequently, we obtain that $\IR(S') > \IR(S)$.

If $\IR(S')=\IR(S)$, then $k=n-2$, so $c$ occurs exactly twice in $S$, 
and $|c-a_i|=n-2$ for all $a_i$ in $S$, where $a_i\ne c$. As a result, $S$ contains only values $1$ and $n-1$. 
Given that $n\ge 4$ and $c$ occurs most frequently in $S$, we deduce that $n=4$ and $S=(1,1,3,3)$. 
Consequently, $S'=(1,1,2,3)$ or $S'=(1,2,3,3)$. Therefore, $f(n)=\log_2(3/2)$, $\IR(S)=4\cdot 2^{f(n)}=6$, and $\IR(S')=2\cdot 2^{f(n)}+3\cdot 1=6$ as well.
\end{proof}

It is worth noting that the previous proof does not apply to graphic sequences, as $S'$ is not necessarily graphic if $S$ is.

For $n\ge 6$, it holds that $\frac{n-1}{n-2} < (n-2)^{1/n}$, implying $\log_{n-2}\frac{n-1}{n-2} < \frac{1}{n}$. 
By computer search, we determined that the antiregular graph attains the maximum value of $\IR(G)$ even when $f(n)=\frac{1}{n}$ and $n\le 11$.
Consequently, we present the following problem statement.

\begin{problem}
\label{prob:1/n}
Let $f(n)=\frac 1n$.
Is it true that the maximum value of $\IR(G)$ is achieved when $G$ is an antiregular graph?
\end{problem}

Observe that $\lim_{n\to\infty}(n-2)^{f(n)}=1$ if $f(n)=\log_{n-2}\frac{n-1}{n-2}$, while $\lim_{n\to\infty}(n-2)^{f(n)}=\infty$ if $f(n)=\frac 1n$.
Thus, if the answer to Problem~{\ref{prob:1/n}} is negative, does the negative result still hold when $f(n)=\frac{1}{n}$ is replaced by $f(n)=\log_{n-2}(c)$ 
for a constant $c>1$? Conversely, if the answer to Problem~{\ref{prob:1/n}} is positive, then the following problem arises.

\begin{problem}
\label{prob:c}
Let $f(n)=c$, where $c$ is a real number in the interval $(0,1)$.
Is it true that the maximum value of $\IR(G)$ is achieved when $G$ is an antiregular graph?
\end{problem}

\subsection{Trees}

Now we focus on trees.
The path and the star on $n$ vertices are denoted by $P_n$ and $S_n$, respectively.
For the minimal value of $\IR$, we have the following statement.

\begin{theorem}
\label{thm:min_tree}
Let $T$ be a tree with the minimum value of $\IR$.
\begin{enumerate}
\item If $f(n) > \log_{n-2}\left(\frac{2n-4}{n-1}\right)$, then $T  \cong P_n$.
\item If $f(n) = \log_{n-2}\left(\frac{2n-4}{n-1}\right)$, then $T  \cong P_n$ or $T  \cong S_n$.
\item If $f(n) < \log_{n-2}\left(\frac{2n-4}{n-1}\right)$, then $T  \cong S_n$.
\end{enumerate}
Moreover, if $f(n) < \log_{n-2}\left(\frac{2n-4}{n-1}\right)$, then $P_n$ is the tree with the second smallest value of $\IR$.
\end{theorem}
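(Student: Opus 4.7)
The plan is to compute $\IR(P_n)$ and $\IR(S_n)$ directly from their degree sequences, read off the threshold from $\IR(P_n)=\IR(S_n)$, and then establish the central claim that every tree $T$ on $n\geq 4$ vertices with $T\not\cong P_n$ and $T\not\cong S_n$ satisfies $\IR(T)>2n-4$. From this claim the three parts of the theorem are immediate by case analysis on the threshold, and in Case~3 the chain $\IR(S_n)<\IR(P_n)=2n-4<\IR(T)$ for every other tree gives the \emph{moreover} statement. The two base computations are straightforward: $P_n$ has two leaves and $n-2$ vertices of degree $2$, so every distinct-degree pair has imbalance $1$ and $\IR(P_n)=2(n-2)$; in $S_n$ each of the $n-1$ leaves pairs with the center of degree $n-1$, giving $\IR(S_n)=(n-1)(n-2)^{f(n)}$, and $\IR(P_n)=\IR(S_n)$ rearranges to $f(n)=\log_{n-2}\frac{2n-4}{n-1}$.

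For the central claim, I use the bound $|d(u)-d(v)|^{f(n)}\geq 1$ whenever $d(u)\neq d(v)$ to conclude $\IR(T)\geq N(T)$, where $N(T)$ is the number of unordered vertex pairs with distinct degrees. Letting $L$ be the number of leaves and $M=n-L$, every leaf--nonleaf pair has distinct degrees, so $N(T)\geq LM$. Since every tree has $L\geq 2$, and $L=n-1$ forces $T=S_n$, for $T\not\cong S_n$ we have $2\leq L\leq n-2$. The identity $LM-(2n-4)=-(L-2)(L-(n-2))\leq 0$ on this interval yields $LM\geq 2n-4$, hence $\IR(T)\geq N(T)\geq 2n-4$.

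The main obstacle is upgrading this to a strict inequality when $T\not\cong P_n$. Equality in $LM\geq 2n-4$ forces $L=2$ or $L=n-2$. If $L=2$, then the $n-2$ non-leaves have average degree $(2n-4)/(n-2)=2$ and each non-leaf degree is at least $2$, so all equal $2$ and $T=P_n$. If $L=n-2$, then $T$ is a double star with two non-leaves of degrees $a,b$ satisfying $a+b=n$; when $a\neq b$ the non-leaf pair contributes one extra distinct-degree pair, so $N(T)\geq 2n-3>2n-4$; when $a=b=n/2$ (which requires even $n\geq 6$) we have $N(T)=2n-4$, but every non-zero imbalance equals $n/2-1\geq 2$, so $\IR(T)=(2n-4)(n/2-1)^{f(n)}>2n-4$ since $f(n)>0$. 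In every remaining case $2<L<n-2$ strictly, so $LM>2n-4$ and the bound $\IR(T)\geq LM$ already gives $\IR(T)>2n-4$, completing the central claim and hence the theorem.
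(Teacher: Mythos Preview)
Your proof is correct and follows essentially the same route as the paper: compute $\IR(P_n)=2n-4$ and $\IR(S_n)=(n-1)(n-2)^{f(n)}$, and show that any other tree has strictly more than $2n-4$ as its $\IR$-value by lower-bounding via the number $N(T)$ of unordered pairs with distinct degrees. The paper argues this last point more tersely (``at least two leaves and at least two non-leaves force $N(T)\ge 2(n-2)$, and equality with $\IR(T)\le 2n-4$ forces all differences equal to $1$, hence $T=P_n$''), whereas you make the quadratic $LM=L(n-L)$ explicit and split off the boundary cases $L=2$ and $L=n-2$; your handling of the balanced double star is exactly the case the paper absorbs into ``not all differences equal $1$''.

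One small slip: in the displayed identity you wrote $LM-(2n-4)=-(L-2)(L-(n-2))\le 0$, but on $2\le L\le n-2$ the product $(L-2)(L-(n-2))$ is $\le 0$, so the right-hand side is $\ge 0$, which is precisely what you then use to conclude $LM\ge 2n-4$. Fix the inequality sign there.
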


\begin{proof}
We begin with the path $P_n$. It has two vertices of degree $1$ and $n-2$ vertices of degree $2$. 
Therefore, $P_n$ contains $2(n-2)$ pairs of vertices whose degrees differ by $1$, 
while the other pairs of vertices contribute $0$ to $\IR$. Hence, we have:
$$
\IR(P_n) = 2n - 4.
$$

Let $T$ be a tree on $n$ vertices with $\IR(T)\le\IR(P_n)$.
Then $T$ cannot contain more than $2n-4$ pairs of vertices which degrees are different and if it contains exactly $2n-4$ such pairs then their degrees must differ by $1$.

Every tree contains at least two vertices of degree $1$.
If $T$ contains at least $2$ vertices of degree at least $2$, then $T$ has at least $2(n-2)$ pairs of vertices which degrees are different.
If this difference is always $1$ then $T$ contains only vertices of degrees $1$ and $2$, so $T$ is a path.
Thus, $T$ contains only one vertex of degree at least $2$, and hence $T$ is the star $S_n$.
As a consequence, one of $P_n$ and $S_n$ attains the minimum value of $\IR$.
And in the case when $S_n$ attains the minimum value of $\IR$, the path $P_n$ attains the second minimum value of it.

We have
\[
\IR(S_n) = (n - 1)(n - 2)^{f(n)},
\]
thus $\IR(P_n) = \IR(S_n)$ if and only if $f(n) = \log_{n - 2}\left(\frac{2n - 4}{n - 1}\right)$.
Furthermore, when $f(n) > \log_{n - 2}\left(\frac{2n - 4}{n - 1}\right)$, we have $\IR(P_n) < \IR(S_n)$, 
and when $f(n) < \log_{n - 2}\left(\frac{2n - 4}{n - 1}\right)$, $\IR(P_n) > \IR(S_n)$. 
\end{proof}

For many topological indices, $P_n$ and $S_n$ represent opposite extremes. Hence, it may come as a surprise that when $f(n) = \log_{n-2}\left(\frac{2n-4}{n-1}\right)$, both $P_n$ and $S_n$ achieve the minimum value of $\IR$, while all other trees have a larger $\IR$.

While $P_n$ achieves the second minimum value of $\IR$ if $f(n) < \log_{n-2}\left(\frac{2n-4}{n-1}\right)$, the star $S_n$ does not necessarily achieve the second minimum value of $\IR$ when $f(n) > \log_{n-2}\left(\frac{2n-4}{n-1}\right)$. For instance, if $f(n) = \frac{1}{n}$ and $n \geq 5$, then $\IR(Y_n) < \IR(S_n)$, where $Y_n$ is obtained from the claw $S_4$ by subdividing one of the edges exactly $n-4$ times (observe that $\IR(Y_n) = (4n - 16) + 3 \cdot 2^{f(n)}$).

The problem of finding trees with the maximum value of $\IR$ remains open.

\begin{problem}
\label{prob:max_tree}
Let $f(n)$ be a positive function with $\lim_{n\to\infty}f(n)=0$.
Find trees which attain the maximum value of $\IR$.
\end{problem}

\subsection{Chemical graphs}

A graph is considered chemical if its vertices have degrees at most $4$. If a chemical graph has $a_i$ vertices of degree $i$, $1\le i\le 4$, then its degree sequence is denoted by $(1^{a_1},2^{a_2},3^{a_3},4^{a_4})$.

It is evident that the minimum value of $\IR$ is attained by regular graphs, exemplified by structures like a 
cycle $C_n$ or graphs with degree sequences such as $(1^0,2^0,3^0,4^n)$. 
Henceforth, our focus lies on chemical graphs exhibiting the maximum value of $\IR$.

\begin{theorem}
\label{thm:chem_max}
Let $n\ge 7$, $f(n)\le\log_3\left(\frac{3n^2}{3n^2-8}\right)$, and let $(1^{a_1},2^{a_2},3^{a_3},4^{a_4})$ 
be a degree sequence of a chemical graph $G$ with the maximum value of $\IR(G)$.
Then,
\begin{itemize}
\item[1.] If $n=4k-1$, then $a_1=a_3=a_4=k$ and $a_2=k-1$;
\item[2.] If $n=4k$, then $a_1=a_2=a_3=a_4=k$;
\item[3.] If $n=4k+1$, then $a_1=a_2=a_3=k$ and $a_4=k+1$;
\item[4.] If $n=4k+2$, then either $a_1=a_3=k$ and $a_2=a_4=k+1$, or $a_1=a_3=k+1$ and $a_2=a_4=k$.
\end{itemize}
\end{theorem}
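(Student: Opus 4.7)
The plan is to reduce the maximization to a finite problem in the degree-class counts $(a_1, a_2, a_3, a_4)$. Since every vertex of a chemical graph has degree in $\{1, 2, 3, 4\}$, we have
\[
\IR(G) = \sum_{1 \leq i < j \leq 4} a_i a_j |i-j|^{f(n)},
\]
which depends only on the degree sequence. Setting $\beta := 3^{f(n)} - 1$, the hypothesis $f(n) \leq \log_3\bigl(3n^2/(3n^2-8)\bigr)$ is equivalent to $\beta \leq 8/(3n^2 - 8)$, which in particular gives the sharp bound $\beta n < 1$ for every $n \geq 7$.

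The central ingredient will be a smoothing lemma: if some tuple $(a_1, a_2, a_3, a_4)$ of nonnegative integers with $\sum_r a_r = n$ satisfies $a_i \geq a_j + 2$ for indices $i \neq j$, then replacing $(a_i, a_j)$ by $(a_i - 1, a_j + 1)$ strictly increases $\IR$. A direct expansion yields
\[
\Delta \IR = (a_i - a_j - 1)|i-j|^{f(n)} + \sum_{k \neq i,j} a_k\bigl(|j-k|^{f(n)} - |i-k|^{f(n)}\bigr).
\]
The first summand is at least $1$, because $a_i - a_j - 1 \geq 1$ and $|i-j|^{f(n)} \geq 1$. The bracketed differences in the second summand take absolute value at most $\beta$, because the three possible nonzero values $1, 2^{f(n)}, 3^{f(n)}$ differ pairwise by at most $\beta$, so the absolute value of the whole second summand is at most $\beta \sum_k a_k \leq \beta n < 1$. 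Hence smoothing strictly raises $\IR$, regardless of graphicality of either tuple. Iterating, the maximum of $\IR$ over all nonnegative integer tuples with $\sum_r a_r = n$ is attained only on ``balanced'' tuples with every $a_i \in \{\lfloor n/4 \rfloor, \lceil n/4 \rceil\}$.

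To finish, I would enumerate balanced tuples in each residue class of $n \bmod 4$ and apply the parity condition of Theorem~\ref{te-erdos-gallai-modified}, namely that $\sum_i i \cdot a_i$ be even. For $n = 4k$, only $(k, k, k, k)$ is balanced. For $n = 4k+1$, the two balanced candidates with even degree sum are $(k, k+1, k, k)$ and $(k, k, k, k+1)$, and a short comparison shows the latter exceeds the former in $\IR$ by $k(3^{f(n)} - 1) > 0$. The case $n = 4k-1$ is handled analogously, while for $n = 4k+2$ the two surviving candidates are $(k+1, k, k+1, k)$ and $(k, k+1, k, k+1)$, which are related by the symmetry $i \leftrightarrow 5-i$ that preserves $\IR$, and therefore yield equal values. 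A routine verification of the second Erd\H{o}s--Gallai inequality (and connectedness) confirms realizability of each proposed sequence for $n \geq 7$. The main obstacle will be calibrating the smoothing lemma: the hypothesis on $f(n)$ is sharp precisely so that $\beta n < 1$, which is exactly what makes the first summand in $\Delta \IR$ dominate the error term.
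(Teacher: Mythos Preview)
Your proof is correct and follows the same overall architecture as the paper's---reduce to balanced tuples via a smoothing argument, then resolve each residue class modulo~$4$ by parity and a direct comparison---but the smoothing step is organized differently. The paper first smooths the \emph{unweighted} pair count $b=\sum_{i<j}a_ia_j$ (showing $a_i-a_j\ge 2$ forces a strict increase in $b$), and only afterwards transfers this to $\IR$ via the crude sandwich $b\le\IR\le b\cdot 3^{f(n)}$ together with $b\le \tfrac{3}{8}n^2$; this is where the hypothesis $f(n)\le\log_3\!\bigl(\tfrac{3n^2}{3n^2-8}\bigr)$ enters. You instead smooth $\IR$ directly, writing $\Delta\IR$ as a main term $(a_i-a_j-1)|i-j|^{f(n)}\ge 1$ plus an error bounded by $(3^{f(n)}-1)\sum_{k\ne i,j}a_k<\beta n<1$. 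Your route is a bit more direct and in fact only needs the weaker hypothesis $3^{f(n)}<\tfrac{n+1}{n}$, so your remark that the stated bound on $f(n)$ is ``sharp precisely so that $\beta n<1$'' overstates things; the paper's bound is what is needed for \emph{their} two-step argument, not yours. The case analysis and the appeal to Erd\H{o}s--Gallai for realizability are the same in both proofs.
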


\begin{proof}
Let $S=(1^{a_1},2^{a_2},3^{a_3},4^{a_4})$ be a degree sequence of a chemical graph.
First, we show that if $a_i-a_j\ge 2$ for $1\le i,j\le 4$, then degree sequence $S'=(1^{a'_1},2^{a'_2},3^{a'_3},4^{a'_4})$, where $a'_t=a_t$ for $t\in\{1,2,3,4\}\setminus\{i,j\}$, $a'_i=a_i-1$ and $a'_j=a_j+1$, has more pairs of vertices of distinct degree.
Choose $p$ and $q$ so that $\{1,2,3,4\}=\{i,j,p,q\}$.
Then $S$ has $(a_i+a_j)(a_p+a_q)+a_pa_q+a_ia_j$ pairs of vertices of distinct degree, while $S'$ has $(a'_i+a'_j)(a_p+a_q)+a_pa_q+a'_ia'_j$ such pairs.
Consequently, $S'$ has $(a_i-1)(a_j+1)-a_ia_j=a_i-a_j-1$ pairs of vertices of distinct degree more than $S$.
Since $a_i-a_j\ge 2$, we have $a_i-a_j-1>0$.

Hence, a chemical graph with the largest number of pairs of vertices of distinct degree has a degree sequence $(1^{a_1},2^{a_2},3^{a_3},4^{a_4})$, where $|a_i-a_j|\le 1$ for all $i,j$ with $1\le i,j\le 4$. We denote the number of such pairs in a graph $G$ by $b$. Obviously, $b\le 6(\frac{n}{4})^2=\frac{3}{8} n^2$, and $\IR(G)\ge b\cdot 1$. If $H$ is a graph with a smaller number of pairs of vertices of distinct degree, then $\IR(H)\le (b-1)3^{f(n)}$. Hence, $\IR(G)\ge\IR(H)$ if $f(n)\le\log_3\left(\frac{b}{b-1}\right)$. Since $\frac{(\frac{3}{8} n^2)}{(\frac{3}{8} n^2-1)}\le\frac{b}{b-1}$, we have $\IR(G)\ge\IR(H)$ if $f(n)\le\log_3\left(\frac{3n^2}{3n^2-8}\right)$. In fact, we have $\IR(G)>\IR(H)$ in this case, since if $H$ has $b-1$ pairs of vertices of distinct degree, then it has a vertex of degree $2$ or $3$ (recall that $n\ge 7$), and so some differences are smaller than $3$ in $H$.

It remains to determine which degree sequence $(1^{a_1},2^{a_2},3^{a_3},4^{a_4})$, with $|a_i-a_j|\le 1$ for all $1\le i,j\le 4$, yields the largest value of $\IR$. We distinguish four cases.

{\bf Case 1}.
Let $n=4k$.
Let $n=4k$. Then the only sequence of the required type is $(1^k,2^k,3^k,4^k)$. 
By Theorem~\ref{te-erdos-gallai-modified}, this sequence is a degree sequence if $k\ge 2$.

\textbf{Case 2}.
Let $n=4k+1$. Then there are two possible sequences, namely $S_1=(1^k,2^{k+1},3^k,4^k)$ and $S_2=(1^k,2^k,3^k,4^{k+1})$, since the other two sequences yield an odd number of vertices of odd degree, which is impossible. Both $S_1$ and $S_2$ yield the same number of pairs of vertices with distinct degree. Also, the differences are the same, up to the differences between the vertices of degree $1$ and the extra vertex. The sequence $S_1$ has $k$ differences of $1$ which are missing in $S_2$, while $S_2$ has $k$ differences of $3$ which are missing in $S_1$. Since larger differences yield a larger value of $\IR$, the maximum value of $\IR$ is obtained by $S_2$. If $k\ge 2$, then 
by Theorem~\ref{te-erdos-gallai-modified}, $(1^k,2^k,3^k,4^{k+1})$ is a degree sequence.

{\bf Case 3}.
Let $n=4k-1$. The only possible sequences are $(1^k,2^{k-1},3^k,4^k)$ and $(1^k,2^k,3^k,4^{k-1})$. 
The sequence with more values $4$ yields a larger value of $\IR$. By Theorem~\ref{te-erdos-gallai-modified}, 
$(1^k,2^{k-1},3^k,4^k)$ is a degree sequence if $k\ge 2$.

{\bf Case 4}.
Let $n=4k+2$. The only possible sequences are $(1^k,2^{k+1},3^k,4^{k+1})$ and $(1^{k+1},2^k,3^{k+1},4^k)$ 
since the number of vertices of odd degree must be even. Both of these sequences yield the same differences, 
and according to Theorem~\ref{te-erdos-gallai-modified}, they are both degree sequences if $k\ge 2$.
 Consequently, they both yield graphs with the maximum value of $\IR$.
\end{proof}

Observe that if $G$ is the (connected) antiregular graph, then the multiplicity of degrees is only $1$ and $2$.
Hence, difference in multiplicities is at most $1$ in $G$.
From this point of view, graphs with degree sequences as in Theorem~{\ref{thm:chem_max}} can be regarded as antiregular chemical graphs.

Since for chemical graphs the difference between degrees of vertices is bounded, we conjecture the following.

\begin{conjecture}
\label{conj:chemical1/n}
The same graphs as in Theorem~{\ref{thm:chem_max}} have maximum value if $\IR$ even if $f(n)=\frac 1n$.
\end{conjecture}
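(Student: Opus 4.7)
The plan is to follow the two-stage strategy from the proof of Theorem \ref{thm:chem_max}. The second stage, selecting the $\IR$-maximal sequence among those satisfying $|a_i - a_j| \le 1$ within each residue class of $n \bmod 4$, carries over verbatim for any positive $f(n)$: its only ingredients are the monotonicity $1 < 2^{f(n)} < 3^{f(n)}$ and the Erd\H{o}s--Gallai criterion (Theorem \ref{te-erdos-gallai-modified}). The heart of the matter is thus the first stage: for $f(n) = 1/n$ and $n \ge 7$, every $\IR$-maximizing chemical degree sequence $(1^{a_1},2^{a_2},3^{a_3},4^{a_4})$ must satisfy $|a_i - a_j| \le 1$ for all $i,j$.

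I would attack this by a single-vertex swap argument, as in Theorem \ref{thm:chem_max}. If $S'$ is obtained from $S$ by moving one vertex from class $i$ (with $a_i \ge a_j + 2$) to class $j$, a direct computation yields
$$
\IR(S') - \IR(S) = (a_i - a_j - 1)\,|i-j|^{f(n)} + \sum_{p \notin \{i,j\}} a_p \bigl(|j-p|^{f(n)} - |i-p|^{f(n)}\bigr),
$$
and the task is to show that the first term dominates the sum whenever $S$ is unbalanced. In the regime of Theorem \ref{thm:chem_max} this was automatic, since the hypothesis on $f(n)$ forced $\ell^{f(n)}$ arbitrarily close to $1$ for $\ell \in \{1,2,3\}$; with $f(n) = 1/n$, however, we only have $\ell^{1/n} - 1 \le (\ln \ell)/n$, and combined with $a_p$ possibly of order $n$, the sum term can be of constant order and must be beaten by an honest margin rather than by a free epsilon.

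My strategy is a case analysis on which unbalanced pair $(i,j)$ drives the swap. The easiest subcase is when some adjacent pair ($|i-j|=1$) satisfies $a_i - a_j \ge 2$: each correction $|j-p|^{f(n)} - |i-p|^{f(n)}$ is $O(1/n)$ by concavity of $x \mapsto x^{1/n}$, so the sum has absolute value $O(1)$ bounded independently of $n$, and is dominated by the gain $(a_i - a_j - 1) \ge 1$ once $n$ exceeds a small explicit threshold. If no adjacent pair is unbalanced but some non-adjacent pair is (e.g.\ $a_1 - a_4 \ge 2$ while $a_1 - a_2, a_2 - a_3, a_3 - a_4$ are all in $\{-1,0,1\}$), then the larger factor $|i-j|^{f(n)} > 1$ in the gain provides the cushion; here I would choose the swap between the extremal classes and reapply the concavity bound.

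The thorny case I anticipate is a near-bimodal configuration such as $(1^a, 2^0, 3^0, 4^b)$, where the $1 \to 2$ swap loses roughly $b\ln(3/2)/n$ from the $a_4$-contribution, a quantity of order up to $\ln(3/2)/2 \approx 0.2$; so the gain $a-1$ dominates unless both $a$ and $b$ are small relative to $n$, which in combination with $a+b \ge n-2$ forces $n$ itself to be bounded. Thus the asymptotic argument should close out all $n$ above some explicit threshold $N_0$, while the finitely many small cases $7 \le n \le N_0$ would be handled by a finite enumeration of graphic chemical sequences via Theorem \ref{te-erdos-gallai-modified}.
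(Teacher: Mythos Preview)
The statement you are addressing is labelled a \emph{conjecture} in the paper, and the paper does not prove it. What the paper offers in lieu of a proof is a one-paragraph heuristic: for $n\equiv 0\pmod 4$ it sets up the continuous relaxation of maximizing $(x_1x_2+x_2x_3+x_3x_4)+(x_1x_3+x_2x_4)2^{1/n}+x_1x_4\,3^{1/n}$ subject to $x_1+x_2+x_3+x_4=n$, records the real optimum, observes that $\lim_{n\to\infty}(x_1-n/4)=\tfrac18\ln 3$, and then explicitly states that ``unfortunately, this does not directly imply that $x_1=x_2=x_3=x_4=n/4$ is the integer solution.'' So there is no paper proof to compare against; your proposal is attempting strictly more than the authors claim.

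On the substance of your outline: the second stage and the adjacent-swap part of the first stage look sound. For an adjacent swap $i\to j$ with $|i-j|=1$ and $a_i-a_j\ge 2$, the loss from the sum is bounded by $\bigl(\sum_{p\ne i,j}a_p\bigr)(2^{1/n}-1)\le n(2^{1/n}-1)<1$ for all $n\ge 2$, which the gain $a_i-a_j-1\ge 1$ beats outright; no threshold $N_0$ is needed here. The weak spot is your non-adjacent case. You write that ``the larger factor $|i-j|^{f(n)}>1$ in the gain provides the cushion,'' but with $f(n)=1/n$ that factor is only $1+O(1/n)$, while the correction sum is of constant order, so this cushion alone cannot close the gap. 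What actually saves the non-adjacent case is different: once every adjacent pair satisfies $|a_i-a_{i+1}|\le 1$, the four multiplicities lie within a window of width $3$, which forces each $a_p\le n/4+O(1)$ and hence squeezes the correction sum down to roughly $\tfrac14\ln 3<1$. Your sketch does not invoke this, and without it the argument for the non-adjacent swap does not go through. The ``thorny'' bimodal case you single out is in fact already covered by the adjacent analysis (since $a_2=0$ forces $a_1-a_2\ge 2$), so it is the quietly monotone profiles like $(k{+}2,k{+}1,k,k{-}1)$ that deserve the attention.
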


\begin{conjecture}
The same graphs as in Theorem~{\ref{thm:chem_max}} have maximum value if $\IR$ even if $f(n)$ is a constant in the interval $(0,1)$.
\end{conjecture}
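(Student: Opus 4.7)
The natural plan is to follow the structure of the proof of Theorem~\ref{thm:chem_max}: since $\IR(G)$ depends only on the degree sequence of $G$, reduce the problem to maximizing
\[
F(a_1, a_2, a_3, a_4) = (a_1 a_2 + a_2 a_3 + a_3 a_4) + 2^c(a_1 a_3 + a_2 a_4) + 3^c\, a_1 a_4
\]
over non-negative integers with $a_1 + a_2 + a_3 + a_4 = n$, subject to graphicality via parity and Theorem~\ref{te-erdos-gallai-modified}. The argument would proceed in two stages. First, establish a rebalancing lemma showing that at the optimum $|a_i - a_j| \le 1$ for all $i, j$. Second, among the handful of nearly-balanced graphical sequences, perform the same case analysis on $n \bmod 4$ as in Cases 1--4 of Theorem~\ref{thm:chem_max}, use parity to discard non-graphical candidates, and compare the survivors directly.

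The central computation for the rebalancing stage is that replacing $(a_i, a_j)$ by $(a_i - 1, a_j + 1)$ changes $F$ by
\[
\Delta_{i \to j} = w_{ij}(a_i - a_j - 1) + \sum_{k \notin \{i,j\}} (w_{jk} - w_{ik})\, a_k,
\]
where $w_{pq} = |p-q|^c$. In the small-$c$ regime of Theorem~\ref{thm:chem_max} the weights $w_{pq}$ are all close to $1$, the cross-term is negligible, and the sign of $\Delta_{i \to j}$ is dictated by the sign of $a_i - a_j - 1$; this yields the clean monotonicity toward balance. For a fixed constant $c \in (0,1)$, by contrast, the cross-term has order comparable to the first term, and a careful sign analysis splitting on the choice of $(i,j)$ and on the distribution of mass in the remaining coordinates is required.

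The main obstacle is that already the continuous relaxation of $F$ does not place its interior critical point at the balanced configuration. Imposing the natural symmetry $a_1 = a_4 = \alpha$, $a_2 = a_3 = \beta$ in the Lagrange conditions $\nabla F = \lambda(1,1,1,1)$ yields
\[
\alpha = \frac{2^{c-1}\, n}{1 + 2^{c+1} - 3^c},
\]
which equals $n/4$ only at $c = 0$ and drifts toward $n/2$ as $c \to 1$, consistent with $\sigma_t$ being maximized by split-type graphs at $c = 1$. Hence any proof uniform in $c \in (0,1)$ must extract additional structure from the discrete graphicality constraints (parity, Erd\H{o}s--Gallai, connectedness) that the continuous problem does not see. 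A more tractable intermediate target is to establish the conjecture for $c \le c_0$ with $c_0 < 1$ an explicit threshold below which the continuous critical point still rounds to the balanced integer sequence; inside this range the rebalancing argument together with the case-split of Theorem~\ref{thm:chem_max} should close the proof, leaving the delicate range of $c$ close to $1$ as the main remaining difficulty.
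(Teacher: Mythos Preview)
The statement is a \emph{conjecture} in the paper; the authors give no proof of it, only a continuous-relaxation heuristic for the neighbouring Conjecture~\ref{conj:chemical1/n} (the case $f(n)=1/n$), where the real optimum deviates from $n/4$ by a bounded amount. Your proposal is likewise not a proof but a plan with an explicitly acknowledged gap near $c=1$, and you carry out the same Lagrange computation for constant $c$ that the paper does for $f(n)=1/n$.

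The genuine gap is not merely that your rebalancing step is incomplete for large $c$: pushed one step further, your own computation produces a counterexample to the conjecture rather than a proof. Take $n=8$ and $c=0.9$. The sequence $(1^4,2^0,3^0,4^4)$ is the degree sequence of a chemical graph (attach one pendant vertex to each vertex of $K_4$), and
\[
F(4,0,0,4)=16\cdot 3^{0.9}\approx 43.0,
\qquad
F(2,2,2,2)=12+8\cdot 2^{0.9}+4\cdot 3^{0.9}\approx 37.7.
\]
More generally, $F(n/2,0,0,n/2)>F(n/4,n/4,n/4,n/4)$ exactly when $3^{\,c+1}-2^{\,c+1}>3$, which holds for all $c$ above roughly $0.65$. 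Thus the ``additional structure from the discrete graphicality constraints'' that you hope will rescue the argument is not there: the split-type sequence is graphical, connected, and strictly better than the balanced one for $c$ close to $1$. Your instinct to aim only for a threshold $c\le c_0<1$ is therefore the correct target; a uniform statement over all of $(0,1)$, as conjectured, appears to be false.
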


To support Conjecture~{\ref{conj:chemical1/n}, denote by $x_i$ the
number of vertices of degree $i$ in a chemical graph and consider the
case $n\equiv 0\pmod 4$.
Then the task is to maximize $(x_1x_2+x_2x_3+x_3x_4)\cdot
1^{1/n}+(x_1x_3+x_2x_4)\cdot 2^{1/n}+(x_1x_4)\cdot 3^{1/n}$ subject to
constraint $x_1+x_2+x_3+x_4=n$.
In real numbers the solution is $x_1=x_4=\frac n2\cdot\frac{\root n\of
2}{1+2\root n\of 2-\root n\of 3}$, $x_2=x_3=\frac n2\cdot\frac{1+\root
n\of 2-\root n\of 3}{1+2\root n\of 2-\root n\of 3}$, and
$\lim_{n\to\infty}(x_1-n/4)=\frac 18 \ln{3}\doteq 0.1373$.
Unfortunately, this does not directly imply that $x_1=x_2=x_3=x_4=\frac
n4$ is the integer solution.

\section*{Acknowledgements}
Martin Knor acknowledges partial support from Slovak research grants VEGA 1/0567/22, VEGA 1/0069/23, APVV-22-0005, and APVV-23-0076, as well as from the Slovenian Research Agency (ARIS) program P1-0383 and project J1-3002.
R. Škrekovski has been partially supported by ARIS program P1-0383, project J1-3002, and the annual work program of Rudolfovo.
S. Filipovski is partially supported by ARIS research program P1-0285 and research projects N1-0210, J1-3001, J1-3002, J1-3003, and J1-4414.
D. Dimitrov acknowledges partial support from ARIS program P1-0383 and projects J1-3002 and BI-US-24-26-073.

\end{document}